\documentclass[11pt]{amsart}
\usepackage{amsmath,amsfonts,amsthm,mathrsfs,amssymb,amscd,comment,enumerate,amsxtra,url,tikz-cd,physics,tcolorbox}

\input{xypic}
\xyoption{all}
\usepackage{xcolor} 
\colorlet{mdtRed}{red!50!black}
\definecolor{dblue}{rgb}{0,0,.6}
\usepackage[colorlinks]{hyperref}
\hypersetup{linkcolor=blue,citecolor=dblue,filecolor=dullmagenta,urlcolor=mdtRed}

\newtcolorbox{mymathbox}[1][]{colback=white, sharp corners, #1}

\newtheorem{theorem}[equation]{Theorem}
\newtheorem{corollary}[equation]{Corollary}
\newtheorem{lemma}[equation]{Lemma}
\newtheorem{proposition}[equation]{Proposition}
\newtheorem{definition}[equation]{Definition}

\newtheorem*{theorem*}{Theorem}
\theoremstyle{remark}
\newtheorem{remark}[equation]{\bf Remark}

\renewcommand{\P}{\mathbb{P}}
\renewcommand{\O}{\mathcal{O}}

\newcommand{\vv}{{\vee\vee}}

\newcommand{\mb}[1]{\mathbb{#1}}

\newcommand{\spec}{\text{Spec}}

\newcommand{\G}{\mathcal{G}}

\newcommand{\Rep}{\text{Rep}}
\newcommand{\cnf}{C^{nf}}
\newcommand{\vect}{\text{Vect}}
\newcommand{\supp}{\text{Supp}}
\newcommand{\ef}{EF}
\newcommand{\GL}{\text{GL}}
\newcommand{\et}{\text{\'et}}

\numberwithin{equation}{subsection}

\begin{document}

\title{Fundamental Group Schemes of Generalized Kummer Variety}
\author[P. Rasul]{Parvez Rasul} 

\address{\noindent Department of Mathematics, Indian Institute of Technology Bombay, Powai, \newline \indent Mumbai 400076, India} 

\email{\href{mailto:rasulparvez@gmail.com}{rasulparvez@gmail.com}}

\subjclass[2010]{14F35, 14K05, 14J60, 14C05}

\keywords{Abelian surface, $S$-fundamental group-scheme, Kummer variety, Numerically flat locally free sheaf, Tannakian category}

\begin{abstract}
Let $k$ be an algebraically closed field of characteristic $p > 3$.
Let $A$ be an abelian surface over $k$. 
Fix an integer $n \geq 1$ such that $p \nmid n$ and 
let $K^{[n]}$ be the $n$-th Generalized Kummer Variety 
associated to $A$.
In this article we aim to find the $S$-fundamental group scheme and
Nori’s fundamental group scheme of $K^{[n]}$.
\end{abstract}
\maketitle

\section{introduction}
Let $A$ be an abelian surface over an algebraically closed field $k$.
For any positive integer $n$, let $S^n(A)$ be the $n$-th
symmetric product of $A$ and $A^{[n]}$ be the Hilbert scheme 
of closed subschemes of length $n$ on A. 
Let $\varphi : A^{[n]} \to S^n(A)$ denote the Hilbert-Chow morphism
and $\Sigma : S^n(A) \to A$ be the addition morphism.
Let $\rho := \Sigma \circ \varphi : A^{[n]} \to A$ be the composition morphism.
If $n \geq 2$, then the fiber $K^{[n]} := \rho^{-1}(e)$ over the identity
element $e$ of $A$, is a projective scheme of dimension $2(n-1)$, called the $n$-th generalized Kummer variety associated to $A$.
For example, $K^{[2]}$ is just the Kummer $K3$ surface associated to $A$.
If $A$ is a complex abelian surface then it is known that 
the $n$-th generalized Kummer variety $K^{[n]}$ associated to $A$
is a projective hyperkahler variety as shown by Beauville in \cite{Bea83}. 

Let $X$ be a connected, reduced and complete scheme 
over a perfect field $k$. Fix a $k$-rational point $x \in X$.
Nori introduced a $k$-group scheme 
$\pi^N(X,x)$ associated to essentially finite 
locally free sheaves on $X$ in \cite{Nor76}
and further extended the definition of $\pi^N(X,x)$ to 
connected and reduced $k$-schemes in \cite{Nor82}.
In \cite{BPS06} Biswas, Parameswaran and Subramanian defined
the notion of $S$-fundamental group scheme $\pi^S(X,x)$ 
for $X$ a smooth projective curve over any algebraically 
closed field $k$ as a group scheme associated to certain 
Tannakian category of locally free sheaves on $X$.
This is generalized to higher dimensional connected smooth 
projective $k$-schemes and studied by Langer in \cite{Lan11} and \cite{Lan12}.
In general, $\pi^S(X, x)$ carries more information
than $\pi^N (X, x)$ and $\pi^{\text{\'et}}(X,x)$. 
Precise definition of above objects are given in section 2.
Fundamental group scheme is an interesting object in algebraic geometry of positive characteristics.
It is an interesting problem to determine $\pi^{\text{\'et}}(X, x)$,
$\pi^N (X, x)$ and $\pi^S (X, x)$ for well-known varieties.
Some articles addressing this type of questions
are \cite{BH}, \cite{PS19}.
In \cite{BH}, authors have studied fundamental group of a variety with quotient singularities
In \cite{PS19}, authors have computed fundamental group schemes of Hilbert schemes of $n$ points on irreducible smooth projective surfaces over algebraically closed fields of positive characteristic.

In this paper our goal is to compute the fundamental group schemes 
of Generalized Kummer Varieties over algebraically closed field 
of characteristic $p > 3$.
The following theorem is the main result of this article:
\begin{theorem*}[Theorem \ref{theorem1}]
Let $k$ be an algebraically closed field of
characteristic $p>3$ such that $p \nmid n$.
Then the $S$-fundamental group scheme $\pi^S(K^{[n]},n[e])$ of the $n$-th generalized Kummer variety $K^{[n]}$ over $k$ is trivial.
\end{theorem*}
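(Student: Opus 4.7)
My plan is to leverage Beauville's geometric description, which realizes $A^{[n]}$ as an \'etale quotient of $A \times K^{[n]}$ by the $n$-torsion subgroup $A[n]$. Concretely, I would introduce the translation morphism
$$\mu \colon A \times K^{[n]} \longrightarrow A^{[n]}, \qquad (a,\xi) \longmapsto t_a(\xi),$$
and verify that $\mu$ realizes $A \times K^{[n]}$ as the fiber product of $\rho \colon A^{[n]} \to A$ with the multiplication-by-$n$ isogeny $n \colon A \to A$. Since $p \nmid n$, the isogeny $n$ is \'etale of degree $n^{4}$, so $\mu$ is a finite \'etale Galois cover with Galois group $A[n]$.

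Next I would track four morphisms together with their induced maps on $\pi^{S}$. The section $i_K \colon K^{[n]} \hookrightarrow A \times K^{[n]}$, $\xi \mapsto (0,\xi)$, is split by the second projection $\pi_2$, so $(i_K)_{*}$ is a split monomorphism on $\pi^{S}$. The finite \'etale Galois cover $\mu$ gives, via the Tannakian equivalence between numerically flat sheaves on $A^{[n]}$ and $A[n]$-equivariant numerically flat sheaves on $A \times K^{[n]}$, a short exact sequence
$$1 \longrightarrow \pi^{S}(A \times K^{[n]}) \xrightarrow{\mu_{*}} \pi^{S}(A^{[n]}) \longrightarrow A[n] \longrightarrow 1,$$
so $\mu_{*}$ is injective. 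The main result of \cite{PS19}, applied to the abelian surface $A$, supplies an isomorphism $\rho_{*} \colon \pi^{S}(A^{[n]}) \xrightarrow{\sim} \pi^{S}(A)$. Finally, since $\mu \circ i_K$ is the closed immersion of $K^{[n]}$ as $\rho^{-1}(e)$, the composition $\rho \circ \mu \circ i_K$ is the constant map $K^{[n]} \to \{e\}$.

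Combining these, the composition
$$\pi^{S}(K^{[n]}) \xrightarrow{(i_K)_{*}} \pi^{S}(A \times K^{[n]}) \xrightarrow{\mu_{*}} \pi^{S}(A^{[n]}) \xrightarrow{\rho_{*}} \pi^{S}(A)$$
is at once injective (a composition of injections with an isomorphism) and trivial (induced by a constant morphism, hence factoring through $\pi^{S}(\operatorname{Spec} k) = 1$); injectivity then forces $\pi^{S}(K^{[n]}) = 1$.

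The main obstacle will be harvesting the two structural inputs in the precise form needed: first, a clean short exact sequence for $\pi^{S}$ under a finite \'etale Galois cover, which should follow from the Tannakian description of $\pi^{S}$ together with the standard equivalence between $G$-equivariant numerically flat bundles on the cover and numerically flat bundles on the base; and second, the computation $\rho_{*} \colon \pi^{S}(A^{[n]}) \cong \pi^{S}(A)$ from \cite{PS19}, which must be available in the $S$-fundamental group scheme formulation (not merely for $\pi^{\text{\'et}}$ or $\pi^{N}$). The Beauville \'etale cover itself is essentially a base-change identification against $n \colon A \to A$ once $p \nmid n$ is in force.
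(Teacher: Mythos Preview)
Your proposal is correct and follows essentially the same route as the paper: Beauville's Cartesian square exhibiting $\mu$ (the paper's $\nu$) as the pullback of the multiplication-by-$n$ isogeny, the short exact sequence for $\pi^S$ under a finite \'etale Galois cover (proved in the paper as Theorem~\ref{main theorem}), and the input from \cite{PS19}. One point deserves care: what \cite{PS19} literally supplies is an isomorphism $\overline{\delta}\colon \pi^S(A,e)\xrightarrow{\sim}\pi^S(A^{[n]},n[e])$ going in the \emph{opposite} direction from $\rho_*$, constructed from a Tannakian functor rather than from $\rho$; the paper spends a separate lemma (Lemma~\ref{main lemma}) checking the compatibility $\widetilde{\rho}\circ\overline{\delta}=\mathrm{id}$, which is what makes $\widetilde{\rho}$ itself an isomorphism. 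Your ``obstacle'' about harvesting the \cite{PS19} result in the right form is exactly this step. The paper's endgame then runs the five-lemma on the two short exact sequences together with the product formula $\pi^S(A\times K^{[n]})\cong\pi^S(A)\times\pi^S(K^{[n]})$; your alternative, composing with the split section $i_K$ and observing that $\rho\circ\mu\circ i_K$ is constant, is a slightly cleaner way to finish and avoids invoking the product formula explicitly.
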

As a corollary we get 
\begin{theorem*}[Corollary \ref{theorem2}]
Let $k$ be an algebraically closed field of
characteristic $p>3$ such that $p \nmid n$.
Then the Nori's fundamental group scheme $\pi^N(K^{[n]},n[e])$ 
of the $n$-th generalized Kummer variety $K^{[n]}$ over $k$ 
is trivial.
\end{theorem*}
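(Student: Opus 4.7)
The plan is to deduce the triviality of $\pi^N(K^{[n]}, n[e])$ directly from the main theorem by exploiting the general comparison between the $S$-fundamental group scheme and Nori's fundamental group scheme. The key observation is that every essentially finite locally free sheaf on a connected smooth projective $k$-scheme is numerically flat: its Chern classes vanish (in fact they are torsion, and then zero numerically) and it is semistable with respect to any polarisation. Hence the Tannakian category defining $\pi^N$ embeds as a full sub-Tannakian category into the Tannakian category of numerically flat locally free sheaves defining $\pi^S$.

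First I would recall, citing Langer's work (\cite{Lan11}, \cite{Lan12}), that this inclusion of Tannakian categories induces a faithfully flat homomorphism of affine $k$-group schemes
\[
\pi^S(K^{[n]}, n[e]) \twoheadrightarrow \pi^N(K^{[n]}, n[e]).
\]
Second, I would apply the main theorem of the paper, which asserts that the source $\pi^S(K^{[n]}, n[e])$ is trivial. A quotient of the trivial group scheme is trivial, so $\pi^N(K^{[n]}, n[e]) = 1$ follows immediately.

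There is essentially no obstacle; the only thing to check carefully is that $K^{[n]}$ satisfies the hypotheses needed for the comparison map to exist, namely that it is a connected reduced projective $k$-scheme with a $k$-rational point, and that the point $n[e]$ is indeed such a point. These are built into the setup of the paper, so the corollary is formal once the main theorem is in hand.
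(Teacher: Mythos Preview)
Your proposal is correct and is essentially the same argument as the paper's: the paper simply invokes the faithfully flat homomorphism $\pi^S(K^{[n]},n[e]) \to \pi^N(K^{[n]},n[e])$ from \cite[Lemma~6.2]{Lan11} together with Theorem~\ref{theorem1}, exactly as you do. Your additional explanation of why essentially finite bundles are numerically flat is accurate background but not needed beyond citing Langer's result.
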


Now we describe the organization of this paper.
In section 2 we recall the definitions of fundamental group schemes from 
\cite{Nor82} and \cite{Lan11}. 
We also recall some properties which will be needed later.
In section 3 we show that, 
for an \'etale Galois cover $f: X \to Y$ with Galois group $G$, 
there is a short exact sequence 
$1 \to \pi^S(X,x_0) \to \pi^S(Y,y_0) \to G \to 1$.
In section 4 we show some results we need about the 
$n$-th Gneralized Kummer variety $K^{[n]}$.
In section 5 we use some results from 
\cite{PS19} about the fundamental group scheme of $A^{[n]}$
to prove that the fundamental group schemes of 
$K^{[n]}$ are trivial.

\vspace{.3cm}
\noindent
\textbf{Acknowledgements.}
I thank Prof. Sukhendu Mehrotra for useful discussions.

\section{Fundamental Group Scheme}
In the rest of this article, unless mentioned otherwise, $k$ will denote an
algebraically closed field of characteristic $p > 0$.
\subsection{$S$-fundamnetal group scheme}
Let $X$ be a complete connected reduced scheme defined over $k$.
\begin{definition}
A locally free sheaf $E$ on $X$ is called nef
if $\O_{\P(E)}(1)$ is nef on the projectivization $\P(E)$ of $E$.
A locally free sheaf $E$ on $X$ is called numerically flat if
both $E$ and its dual $E^\vee$ are nef. 
\end{definition}
Let $f: X \to Y$ be a surjective morphism of complete $k$-varieties. 
Then the locally free sheaf $E$ on $Y$ is nef if and only 
if $f^*E$ is nef \cite[p. 303, Proposition 1]{Kle66}.
Similarly, since pull back commutes 
with dualization, we have the following:
\begin{lemma}\label{nf}
Let $f: X \to Y$ be a surjective morphism of complete $k$-varieties. 
Let $E$ be a locally free sheaf on $Y$. Then $E$ is numerically flat if and only if $f^*E$ is numerically flat.
\end{lemma}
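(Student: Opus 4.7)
The plan is to reduce the statement about numerical flatness to two applications of the cited Kleiman criterion for nefness, combined with the compatibility of pullback with dualization. By the definition recalled just above the lemma, a locally free sheaf $F$ on any complete connected reduced $k$-scheme is numerically flat precisely when both $F$ and $F^\vee$ are nef, so the problem decouples into two parallel nefness questions.

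First I would invoke the stated result \cite[p.\ 303, Proposition 1]{Kle66}, already recalled in the paragraph preceding the lemma, applied to the surjective morphism $f: X \to Y$ and the locally free sheaf $E$ on $Y$: this gives that $E$ is nef if and only if $f^*E$ is nef. Next I would apply the very same result to the locally free sheaf $E^\vee$ on $Y$, obtaining that $E^\vee$ is nef if and only if $f^*(E^\vee)$ is nef. The elementary but essential observation is the canonical isomorphism $f^*(E^\vee) \cong (f^*E)^\vee$ of locally free sheaves on $X$, which follows from the fact that formation of the dual of a locally free sheaf of finite rank commutes with arbitrary pullback (both sides represent $\mathcal{H}om_{\O_X}(f^*E,\O_X)$).

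Combining these two equivalences gives the chain: $E$ is numerically flat iff $E$ and $E^\vee$ are both nef iff $f^*E$ and $f^*(E^\vee)$ are both nef iff $f^*E$ and $(f^*E)^\vee$ are both nef iff $f^*E$ is numerically flat. This completes the argument.

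There is no real obstacle here; the lemma is a clean corollary of the cited Kleiman criterion together with the standard compatibility $f^*(E^\vee) \cong (f^*E)^\vee$. The only point to be mindful of is that Kleiman's proposition is stated for nef locally free sheaves and requires the hypothesis that $f$ is surjective and that $X, Y$ are complete, both of which are in force by assumption; the locally freeness of $E^\vee$ (needed so that Kleiman applies on the dual side) is automatic since $E$ is locally free of finite rank.
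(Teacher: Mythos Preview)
Your proposal is correct and is exactly the argument the paper gives: the paragraph immediately preceding the lemma cites Kleiman's criterion for the equivalence of nefness under surjective pullback and then remarks that ``pull back commutes with dualization,'' which is precisely your $f^*(E^\vee)\cong (f^*E)^\vee$ step. No separate proof appears in the paper beyond that sentence, so your write-up simply spells out the intended reasoning.
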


Let $\cnf(X)$ denote the full subcategory of the category of coherent sheaves on $X$ whose objects are all numerically flat 
locally free sheaves.
Let $\vect_k$ be the category of finite dimensional $k$-vector spaces.
Let $x \in X$ be a $k$-rational point of $X$
and $T_x : \cnf(X) \to \vect_k$ be the fiber functor
which sends an object $E$ of $\cnf(X)$ to its fiber $E_x$ over $x$.
Then $(\cnf(X), \bigotimes, T_x, \O_X)$ is 
a neutral Tannaka category [\cite{Lan12}, section 2.1].
The affine $k$-group scheme $\pi^S(X,x)$ Tannaka 
dual to this category is called 
the $S$-fundamental group scheme of $X$ with base point $x$ \cite[Definition 2.1]{Lan12}.

Let $X$ and $Y$ be two complete $k$-varieties and 
$x \in X$, $y \in Y$ be $k$-rational points.
Let $f: X \to Y$ be a morphism between them with $f(x) = y$. 
Then pullback by $f$ induces a map $f^* : \cnf(Y) \to \cnf(X)$ which is clearly a morphism of Tannakian categories i.e. 
$$f^* : (\cnf(Y), \otimes, T_{y},\O_Y) \to  (\cnf(X), \otimes, T_{x},\O_X).$$
This induces a morphism of $S$-fundamental group schemes :
$$\widetilde f : \pi^S(X,x) \to \pi^S(Y,y).$$

Let $X$ and $Y$ be two complete $k$-varieties and $x \in X$, 
$y \in Y$ be $k$-rational points.
We consider the projection morphisms 
$pr_X : X \times_k Y \to X$ and $pr_Y : X \times_k Y \to Y$.
Let $\widetilde{pr}_X$ and $\widetilde{pr}_Y$ be the morphisms 
induced on $S$-fundamental group schemes by the two projections, 
then by \cite[Theorem 4.1]{Lan12} we have the following theorem.
\begin{theorem}\label{product}
The natural homomorphism 
$$ \widetilde{pr}_X \times \widetilde{pr}_Y : \pi^S(X \times_k Y, (x,y)) \to \pi^S(X,x) \times_k \pi^S(Y,y)$$
is an isomorphism.
\end{theorem}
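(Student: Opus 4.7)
The plan is to proceed via Tannaka duality. The morphism $\widetilde{pr}_X \times \widetilde{pr}_Y$ is dual to the exterior-tensor functor
$$\cnf(X) \boxtimes \cnf(Y) \longrightarrow \cnf(X \times_k Y), \qquad (E, F) \longmapsto pr_X^* E \otimes pr_Y^* F,$$
so to show $\widetilde{pr}_X \times \widetilde{pr}_Y$ is an isomorphism I must verify two things: (a) this functor is fully faithful with essential image closed under subobjects in $\cnf(X \times_k Y)$ — equivalent, by the standard Tannakian dictionary, to $\widetilde{pr}_X \times \widetilde{pr}_Y$ being faithfully flat; and (b) every object of $\cnf(X \times_k Y)$ is a subquotient of some $pr_X^* E \otimes pr_Y^* F$ — equivalent to $\widetilde{pr}_X \times \widetilde{pr}_Y$ being a closed immersion.

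For the fully-faithful half of (a), the usual reduction via internal Homs (which exist in $\cnf$ since it is a rigid tensor category) turns the claim into
$$H^0(X \times_k Y,\, pr_X^* V \otimes pr_Y^* W) \;\cong\; H^0(X, V) \otimes_k H^0(Y, W)$$
for $V \in \cnf(X)$ and $W \in \cnf(Y)$, i.e.\ the Künneth formula in degree zero, which holds for coherent sheaves on proper $k$-schemes. The closure-under-subobjects part of (a) then follows once (b) is in hand, since the two conditions together identify the image as the full subcategory of $\cnf(X \times_k Y)$.

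The heart of the argument is (b). Given a numerically flat locally free sheaf $V$ on $X \times_k Y$, the natural candidates are the restrictions $V_X := V|_{X \times \{y\}}$ and $V_Y := V|_{\{x\} \times Y}$, which are numerically flat by Lemma \ref{nf} applied to the closed immersions. The task is to exhibit $V$ as a subquotient of $pr_X^* V_X \otimes pr_Y^* V_Y$ — a seesaw-type statement for numerically flat bundles. I expect this to be the main technical obstacle; it hinges on the structure theory developed in \cite{Lan11}, namely that a numerically flat bundle admits a filtration by strongly semistable bundles with vanishing Chern classes, together with the rigidity of such bundles in flat families over a proper base. The Tannakian packaging and the degree-zero Künneth input are formal by comparison.
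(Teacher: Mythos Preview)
The paper does not give its own proof of this statement: it simply records the result and cites \cite[Theorem 4.1]{Lan12}. So there is nothing to compare against beyond Langer's original argument.

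Your Tannakian framework is the right one, and the reduction of full faithfulness to the degree-zero K\"unneth isomorphism is correct and standard. However, what you have written is a plan, not a proof. The entire content of the theorem sits in step~(b), and you explicitly leave it open (``I expect this to be the main technical obstacle''). Your proposed seesaw statement---that an arbitrary numerically flat $V$ on $X\times_k Y$ is a subquotient of $pr_X^*V_X\otimes pr_Y^*V_Y$ with $V_X,V_Y$ the fibre restrictions---is not obviously the right formulation, and it is not clear it is even true as stated: for instance, if the restrictions $V_X$ and $V_Y$ happen to be trivial, your candidate ambient bundle is $\mathcal{O}^{r^2}$, and you would be asserting that $V$ itself is trivial. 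Langer's actual proof in \cite{Lan12} does not proceed this way; it uses the structure theory from \cite{Lan11} (numerically flat bundles admit filtrations with strongly semistable graded pieces of vanishing Chern classes) together with boundedness and semicontinuity to produce, for any numerically flat $V$ on the product, a filtration whose successive quotients are genuinely of the form $E_i\boxtimes F_i$. That is a stronger conclusion than ``subquotient of a single box product'', and it is what makes the argument go through.

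There is also a logical slip in your handling of~(a). You write that closure under subobjects ``follows once (b) is in hand, since the two conditions together identify the image as the full subcategory''. But (b) only says every object is a \emph{subquotient} of something in the image, not that every object lies in the image; so you cannot conclude the image is everything from (b) plus full faithfulness without already knowing closure under subobjects. You need either an independent argument for closure under subobjects, or the stronger filtration statement above (which gives essential surjectivity directly).
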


\subsection{Nori's Fundamnetal Group scheme}
Let $X$ be a connected, proper and reduced $k$-scheme.

A locally free sheaf $E$ on $X$ is said to be finite if there are distinct non-zero polynomials $f,g \in \mb Z[t]$ with non-negative
coefficients such that $f(E) \cong g(E)$.

\begin{definition}
A locally free sheaf $E$ on $X$ is said to be essentially finite if there exist 
two numerically flat locally free sheaves $V_1,V_2$ and finitely many finite locally free sheaves $F_1,\dots,F_n$ on $X$ with $V_2 \subseteq V_1 \subseteq 
\bigoplus_{i =1}^nF_i$ such that $E \cong V_1/V_2$.
\end{definition}

Let $\ef(X)$ denote the full subcategory of the category of coherent sheaves on $X$ whose objects are essentially finite locally free sheaves on $X$.
Let $x \in X$ be a $k$-rational point of $X$ and
$T_x : \ef(X) \to \vect_k$ be the functor defined by sending an object 
$E \in \ef(X)$ to its fiber $E_x$ at $x$.
Then $(\ef(X), \bigotimes, T_x, \O_X)$ is a neutral Tannakian category.
The affine $k$-group scheme $\pi^N(X,x)$ representing 
the functor of $k$-algebras 
$\underline{\text{Aut}}^{\otimes}(T_x)$ is called Nori's fundamental group scheme of $X$ based at $x$.
(Definition of the functor $\underline{\text{Aut}}^{\otimes}(T_x)$ 
can be found in \cite[section 1]{DMOS82}).

\section{A short exact sequence of fundamental group schemes}
For any group scheme $G$ over $k$, let $\Rep_k(G)$ denotes
the category of representations of $G$ into finite dimensional $k$-vector spaces.
Let $\theta : G \to H$ be a homomorphism of affine group schemes over $k$ and let
$$ \theta^* : \Rep_k(H) \to \Rep_k(G)$$
be the functor given by sending a representation
$\alpha: H \to  \GL(V)$ to $\alpha \circ \theta : G \to \GL(V)$.

Let $f: X \to Y$ be a morphism between two smooth projective $k$-varieties 
and $f(x_0) = y_0$. 
Then we have seen in section 2 that $f$ induces a morphism of 
$S$-fundamental group schemes :
$$\widetilde f : \pi^S(X,x_0) \to \pi^S(Y,y_0).$$

Let further $f$ be an \'etale Galois cover with Galois group $G$.
Then the group $G$ has a right action on $X$.
Each $g \in G$ defines an automorphism say 
$\theta_g \in Aut(X)$.
This induces a left action of $G$ on $\O_X$, 
which is defined by 
$g \cdot f = f \circ \theta_g$
where $g \in G$ and $f \in \Gamma(U,\O_X)$, 
$U$ is an open set in $X$.
Let $\sigma : G \to \GL(V)$ be any object of $\Rep_k(G)$.
Then $\O_X \otimes_k V$ is a locally free sheaf on $X$ 
with a left action of $G$ on it, 
given by 
$$ g \cdot (f \otimes v) = (f \circ \theta_g) \otimes \sigma(g)(v)$$
for any $g \in G$, $v \in V$ and $f \in \Gamma(U,\O_X)$,
$U$ is an open set in $X$.
Let $(\O_X \otimes_k V)^G$ be the subsheaf of $G$-invariants in 
$(\O_X \otimes_k V)$ with this action of $G$.
Then $E_{\sigma} := (\O_X \otimes_k V)^G$ 
is a locally free sheaf on $Y$
(see \cite[Section 2, p. 31]{Nor76}).
Pullback of $E_{\sigma}$ on $X$ is trivial 
(by \cite[Theorem 2.6]{Nor76}).
So by Lemma \ref{nf} it follows that $E_{\sigma}$ is numerically flat on $Y$.
Thus each $\sigma \in \Rep_k(G)$ gives an associated 
locally free sheaf $E_\sigma \in \cnf(Y)$.
This defines a functor $P_f^* : \Rep_k(G) \to \cnf(Y)$.
By the equivalence of categories $\cnf(Y)$ and $\Rep_k(\pi^S(Y,y_0))$,
we get a morphism $P_f: \pi^S(Y,y_0) \to G$ of group schemes.
We will denote this map just by $P$ when no confusion can arise.
So given an \'etale galois cover $f : X \to Y$ with Galois group $G$,
we get a sequence of morphisms of group schemes :
$$
\pi^S(X,x_0) \xrightarrow{\widetilde f} 
\pi^S(Y,y_0) \xrightarrow{P} G \,.
$$

\begin{remark}\label{surjective}
By \cite[Lemma  6.2]{Lan11}, there exist a natural faithfully flat 
homomorphism $\pi^S(Y,y_0) \to \pi^{\et}(Y,y_0)$ of group schemes.
Moreover $P$ is composition of the natural maps
$$\pi^S(Y,y_0) \to \pi^{\et}(Y,y_0) \to G$$
both of which are faithfully flat.
So $P$ is faithfully flat.
\end{remark}

We will use Theorem A.1 of \cite[appendix A]{EPS06}  to prove our main result of this section.
\begin{proposition}[Theorem A.1, \cite{EPS06}] \label{criterion}
Let $L \xrightarrow{q} G \xrightarrow{r} A$ be a sequence of 
homomorphisms of affine group schemes over a field $k$
with induced sequence of functors 
$\Rep_k(A) \xrightarrow{r^*} \Rep_k(G) \xrightarrow{q^*} \Rep_k(L)$.
Then 
\begin{enumerate}
    \item the map $q : L \to G$ is closed immersion if and only if  any object of $\Rep_k(L)$ is a subquotient of object of the form $q^*(V)$ for some $V \in \Rep_k(G)$.
    \item Assume that $q$ is a closed immersion and $r$ is faithfully flat. Then the sequence $L \xrightarrow{q} G \xrightarrow{r} A$ is exact if and only if the following conditions hold :
    \begin{enumerate}
        \item For an object $V \in \Rep_k(G)$, $q^*(V)$ in $\Rep_k(L)$  is trivial if and only if $V \cong r^*U$ for some $U \in Rep(A)$.
        \item Let $W_0$ be the maximal trivial subobject of $q^*(V)$ in $\Rep_k(L)$, then there exists $V_0 \subset V$ in $\Rep_k(G)$, such that $q^*(V_0) \cong W_0$. 
        \item Any $W$ in $\Rep_k(L)$ is quotient of $q^*(V)$ for some $V \in \Rep_k(G)$.
    \end{enumerate}
\end{enumerate}
\end{proposition}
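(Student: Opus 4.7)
The plan is to work entirely through the Tannakian dictionary, translating statements about homomorphisms of affine group schemes into statements about their representation categories. Denoting the coordinate Hopf algebras by $O(L), O(G), O(A)$, I would use that $\Rep_k(G)$ is equivalent to the category of finite-dimensional $O(G)$-comodules, and that $q$ being a closed immersion corresponds to $O(G) \to O(L)$ being surjective, while $r$ being faithfully flat corresponds to $O(A) \to O(G)$ being faithfully flat (in particular injective).

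For part (1), my plan is to exploit the fact that every object of $\Rep_k(L)$ embeds into a finite direct sum of copies of the regular representation $O(L)$, and that $O(L)$ is the directed union of its finite-dimensional $L$-subcomodules. If $q^*$ is surjective on coordinate rings, then every finite-dimensional $L$-subcomodule of $O(L)$ is the image of some finite-dimensional $G$-subcomodule $V \subset O(G)$, so any $L$-rep arises as a subquotient of $q^*(V)$. Conversely, if every $L$-rep is a subquotient of a pulled-back $G$-rep, the images of restricted $G$-comodules exhaust the finite-dimensional pieces of $O(L)$, which forces the Hopf algebra map to be surjective.

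For the necessity direction of part (2), I would check each condition directly, assuming $L = \ker(r)$. Descent for faithfully flat morphisms of group schemes gives (a): an object $V \in \Rep_k(G)$ descends to $\Rep_k(A) = \Rep_k(G/L)$ exactly when $L$ acts trivially. For (b), the $L$-invariants of $V$ form a $G$-subobject because $L$ is normal in $G$ as a kernel. For (c), I would use part (1) applied to $q$ together with (b) to upgrade a subquotient presentation into a quotient presentation, essentially by splitting off the trivial $L$-invariant summand at the top.

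The sufficiency direction is where I expect the main obstacle to lie. My plan is to set $K = \ker(r)$, note that $q$ factors through a map $\iota : L \to K$, and prove $\iota$ is an isomorphism by showing that the restriction functor $\iota^* : \Rep_k(K) \to \Rep_k(L)$ is an equivalence of neutral Tannakian categories. Since $r$ is faithfully flat, the exact sequence $1 \to K \to G \to A \to 1$ makes every $K$-rep a subquotient of some $q^*(V)$; conditions (a) and (b) will then match $L$-invariants with $K$-invariants at the level of restricted $G$-reps in a $G$-equivariant fashion, and condition (c) upgrades subquotients to quotients so that an inverse functor can be built. Assembling these carefully and invoking Tannaka reconstruction will give $L \cong K$ and finish the proof; the delicate point is ensuring that (b) really does allow us to transport $L$-invariants back to $G$-subobjects in a way compatible with the tensor structure, which is what makes the Tannakian equivalence work.
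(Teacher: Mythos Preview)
The paper does not give its own proof of this proposition: it is quoted as Theorem~A.1 of \cite{EPS06} (Esnault--Hai--Sun) and used as a black box in the proofs of Lemma~\ref{injective} and Theorem~\ref{main theorem}. There is therefore nothing in the paper to compare your proposal against.

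That said, your outline is essentially the standard Tannakian argument and matches the proof in the cited reference. Part~(1) is Proposition~2.21(b) of Deligne--Milne \cite{DMOS82}, and your sketch via finite-dimensional subcomodules of the regular comodule $O(L)$ is the right one. For the necessity direction of~(2), your arguments for (a) and (b) are correct; for (c) the cleanest route is not to invoke~(b) but simply to dualize: when $q$ is a closed immersion, every $L$-representation $W$ actually embeds (not merely as a subquotient) into some $q^*(V)$ --- via Frobenius reciprocity $W \hookrightarrow q^*\bigl(\mathrm{Ind}_L^G W\bigr)$ and then passing to a finite-dimensional $G$-subcomodule --- so $W^\vee$ is a quotient of $q^*(V^\vee)$, and replacing $W$ by $W^\vee$ gives (c). For the sufficiency direction, your plan to factor $q$ through $K = \ker(r)$ and show that $\iota^* : \Rep_k(K) \to \Rep_k(L)$ is a tensor equivalence is exactly the strategy in \cite{EPS06}; the point you single out as delicate --- that condition~(b) allows one to lift $L$-invariants to $G$-subobjects, hence to identify them with $K$-invariants --- is indeed the crux of their argument.
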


\begin{lemma}\label{injective}
Let $f : X \to Y$ be an \'etale Galois cover of smooth projective 
$k$-varieties with a finite Galois group $G$.
Let $x_0 \in X$ and $y_0 \in Y$ be $k$-points such that $f(x_0) = y_0$.
Then the induced morphism on $S$-fundamental group schemes :
$$\widetilde f : \pi^S(X,x_0) \to \pi^S(Y,y_0).$$
is a closed immersion.
\end{lemma}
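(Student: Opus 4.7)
The plan is to verify the criterion of Proposition \ref{criterion}(1): we need to show that every $E \in \cnf(X)$ is a subquotient of $f^*V$ for some $V \in \cnf(Y)$, which will give that $\widetilde f$ is a closed immersion via the Tannakian equivalences $\cnf(X) \simeq \Rep_k(\pi^S(X,x_0))$ and $\cnf(Y) \simeq \Rep_k(\pi^S(Y,y_0))$.

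The natural candidate for $V$ is $f_*E$. Since $f$ is finite \'etale, $f_*E$ is a locally free sheaf on $Y$. The key computation is that, because $f$ is Galois with group $G$, the base change gives $X \times_Y X \cong \bigsqcup_{g \in G} X$, so the projection formula yields a canonical isomorphism
$$ f^*f_*E \;\cong\; \bigoplus_{g \in G} \theta_g^*E $$
on $X$, where $\theta_g \in \text{Aut}(X)$ is the automorphism corresponding to $g$. Each $\theta_g^*E$ is numerically flat because $\theta_g$ is an isomorphism, so the direct sum $f^*f_*E$ is numerically flat on $X$. By Lemma \ref{nf} applied to the surjective morphism $f$, it follows that $f_*E$ itself is numerically flat on $Y$, so $V := f_*E$ indeed belongs to $\cnf(Y)$.

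To conclude, I would use the counit of the adjunction $f^*f_*E \to E$, which under the above decomposition corresponds to the projection onto the summand indexed by the identity element of $G$; this map is surjective. Therefore $E$ is a quotient of $f^*V = f^*f_*E$, and in particular a subquotient. Applying Proposition \ref{criterion}(1) to the sequence $\pi^S(X,x_0) \xrightarrow{\widetilde f} \pi^S(Y,y_0)$ then yields that $\widetilde f$ is a closed immersion.

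The only delicate point I expect is justifying the decomposition $f^*f_*E \cong \bigoplus_{g \in G}\theta_g^*E$ and the surjectivity of the counit; both are standard consequences of \'etale descent for a Galois cover, but they must be stated carefully since the whole argument hinges on producing $E$ as a quotient (and not merely a subobject) of the pullback of a numerically flat sheaf from $Y$.
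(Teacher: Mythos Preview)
Your proof is correct and follows essentially the same approach as the paper: take $V = f_*E$, use flat base change along the Galois cover to decompose $f^*f_*E \cong \bigoplus_{g \in G} \theta_g^*E$, apply Lemma~\ref{nf} to conclude $f_*E \in \cnf(Y)$, and then exhibit $E$ inside $f^*f_*E$. The only cosmetic difference is that the paper realizes $E$ as the summand indexed by $g=e$ (a subobject), while you use the counit to realize it as a quotient; either suffices since Proposition~\ref{criterion}(1) only requires a subquotient, so your closing concern about needing a quotient rather than a subobject is unnecessary.
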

\begin{proof}
By proposition \ref{criterion} part (1), we need only show that any object of $\cnf(X)$ is a subquotient of an object of the form $f^*V$ for some $V \in \cnf(Y)$.
So let $E$ be a numerically flat locally free sheaf on $X$.
$f_*E$ is a locally free sheaf on $Y$ as $f$ is an \'etale Galois cover.
Consider the Cartesian diagram   
\[
\begin{tikzcd}
X \times_Y X \cong G \times_k X \arrow{r}{pr} \arrow{d}{\mu} & X \arrow{d}{f}\\
X \arrow{r}{f} & Y
\end{tikzcd}
\]
where $\mu$ is the group action of $G$ on $X$ and $pr$ 
denotes the natural projection $G \times_k X \to X$.
Since $f$ is flat, we have 
$f^*f_*E \cong \mu_* pr^* E$
and $\mu_* pr^* E \cong \bigoplus_{g \in G} g^*E
\cong \bigoplus_{g \in G}E$ is numerically flat.
So $f^*f_*E$ is numerically flat.
By Lemma \ref{nf} it follows that $f_*E$ is numerically flat.
We take $V = f_*E \in \cnf(Y)$.
As $E \subset \bigoplus_{g \in G} g^*E$, $E$ is a subquotient of 
$f^*V$.
This proves that $\widetilde{f}$ is closed immersion.
\end{proof}

The main result of this section is the following.
\begin{theorem}\label{main theorem}
Let $f : X \to Y$ be an \'etale Galois cover of smooth projective 
$k$-varieties with a finite Galois group $G$.
Let $x_0 \in X$ and $y_0 \in Y$ be $k$-points such that $f(x_0) = y_0$.
Then the sequence of morphisms of group schemes 
\begin{equation}\label{ses}
1 \to \pi^S(X,x_0) \xrightarrow{\widetilde f} 
\pi^S(Y,y_0) \xrightarrow{P} G \to 1    
\end{equation}
 is exact.
\end{theorem}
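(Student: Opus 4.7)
The plan is to check the hypotheses of Proposition \ref{criterion}(2) applied to the sequence $\pi^S(X,x_0) \xrightarrow{\widetilde f} \pi^S(Y,y_0) \xrightarrow{P} G$. By Lemma \ref{injective} the first arrow is a closed immersion, and by Remark \ref{surjective} the second is faithfully flat, so the hypotheses of the proposition are met; it remains to verify its three compatibility conditions (a), (b), (c). Via the Tannakian equivalences $\Rep_k(\pi^S(X,x_0)) \simeq \cnf(X)$ and $\Rep_k(\pi^S(Y,y_0)) \simeq \cnf(Y)$, these become statements about the pullback functor $f^* \colon \cnf(Y) \to \cnf(X)$ and the assignment $\sigma \mapsto E_\sigma$ from $\Rep_k(G)$ to $\cnf(Y)$ introduced above.

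For (a) I would argue by Galois descent. The ``if'' direction is already recorded in the construction of $E_\sigma$: $f^* E_\sigma \cong \O_X \otimes_k V_\sigma$ is trivial. For the converse, suppose $f^*V \cong \O_X^{\oplus r}$; set $V_\sigma := H^0(X, f^*V)$, a $k$-vector space of dimension $r$. The descent datum on $f^*V$ (equivalently, the canonical $G$-action coming from the fact that $V$ is defined on $Y$) induces a representation $\sigma \colon G \to \GL(V_\sigma)$, and then Galois descent gives $V \cong (f_* f^*V)^G \cong E_\sigma$.

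For (b), let $W_0 \subset f^*V$ be the maximal trivial subobject in $\cnf(X)$. Uniqueness forces $W_0$ to be stable under the canonical $G$-action on $f^*V$, so Galois descent yields $V_0 := (f_* W_0)^G \subset (f_* f^*V)^G = V$ with $f^* V_0 \cong W_0$; it lies in $\cnf(Y)$ by Lemma \ref{nf} since its pullback is trivial. For (c), given $E \in \cnf(X)$ I take $V := f_* E$, which lies in $\cnf(Y)$ by the argument already given inside Lemma \ref{injective}. The isomorphism $f^* f_* E \cong \bigoplus_{g \in G} g^* E$ from the Cartesian square in that proof exhibits $E$ as a direct summand, hence in particular as a quotient, of $f^*V$. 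Combining (a), (b), (c), Proposition \ref{criterion}(2) gives the exactness of \eqref{ses}.

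The step requiring the most care is (b): one has to identify the Tannakian ``maximal trivial subobject'' $W_0$ inside $\cnf(X)$ with a concrete subbundle of $f^*V$ spanned by global sections, verify that it actually lies in $\cnf(X)$ as a subobject, and confirm that both $W_0$ and its descent $V_0$ are genuinely numerically flat locally free sheaves rather than merely coherent subsheaves. The remaining work in (a) and (c) is essentially a routine application of étale Galois descent and the base-change isomorphism already exploited in the proof of Lemma \ref{injective}.
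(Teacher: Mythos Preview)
Your proposal is correct and follows the same overall strategy as the paper: invoke Proposition~\ref{criterion}(2) and verify conditions (a)--(c) via $f_*$, $f^*$, and Galois descent; your argument for (c) is identical to the paper's. The execution differs slightly in (a) and (b). For (a), rather than descending $f^*V$ directly as you do, the paper embeds $E$ into $E' := f_* f^* E$, observes that $E'$ is a sum of copies of $f_*\O_X$, and then explicitly identifies $f_*\O_X$ with the bundle associated to the regular representation $k[G]$; since $P$ is faithfully flat, the subobject $E \subset E'$ then also comes from $\Rep_k(G)$. For (b), the paper proceeds exactly as you outline (show $g^*W_0 = W_0$ by maximality, set $V_0 = (f_* W_0)^G$), but instead of citing Galois descent it verifies the isomorphism $f^*V_0 \xrightarrow{\sim} W_0$ by a hands-on local computation passing to completions along the fibres of $f$---precisely the care you flag in your final paragraph.
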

\begin{proof}
We have already proved that $\widetilde{f}$ is closed immersion
in Lemma \ref{injective}.
Also by remark \ref{surjective} $P$ is faithfully flat.
We are left to show that the sequence is exact in the middle.
By proposition \ref{criterion} part (2), we need to show the following :
\begin{enumerate}[(a)]
    \item For any numerically flat locally free sheaf $E \in \cnf(Y)$, $f^*E$ is trivial if and only if $E = P^*U$ for some $U \in \Rep_k(G)$.
    \item Let $V \in \cnf(Y)$ and $W_0$ be maximal trivial subobject of $f^*V$, then there exists $V_0 \subseteq V$ in $\cnf(Y)$
    such that $f^*V_0 = W_0$.
    \item Any numerically flat locally free sheaf $W \in \cnf(X)$ is a quotient of $f^*V$ for some numerically flat locally free sheaf $V \in \cnf(Y)$.
\end{enumerate}

To prove (a), let $E$ be a numerically flat locally 
free sheaf on $Y$.
If $E = P^*U = (\O_X \otimes_k U)^G$ for some 
$U \in \Rep_k(G)$ then $f^*E$ is a trivial bundle on $X$.
For the other direction, let $f^*E$ is trivial.
Then we need to show that $E$ is associated to some representation of $G$.
Consider the locally free sheaf $$E' : = f_* f^*E$$ on $Y$.
As $f$ is faithfully flat, the natural map 
$E \to E'$ is injective.
Let 
$$\sigma : \pi^S(Y,y_0) \to \GL(W) \quad \text{ and } \quad 
\sigma' : \pi^S(Y,y_0) \to \GL(W')$$
be the 
representations of $\pi^S(Y,y_0)$ 
which corresponds to $E$ and $E'$ respectively.
We need to show that $\sigma$ factors through $P$.
As $\sigma$ is a subrepresentation of $\sigma'$,
it is enough to show that
$\sigma'$ factors via $P$ or equivalently 
$E'$ is associated to some representation of $G$.
Since $f^*E$ is trivial, so $E'=f_*f^*E$ is isomorphic to finite direct sum of $f_*\O_X$.
So again it is enough to show that $f_*\O_X$ is 
associated to a representation of $G$.
Let $W$ be the group ring $k[G]$ (viewing as a $k$-vector space) 
with left regular representation of $G$ on it i.e. $h \in G$ sends an element
$\sum_{g \in G} a_g [g] \in k[G]$ to $\sum_{g \in G} a_g [hg]$.
We prove that $(\O_X \otimes_k W)^G$ is isormorphic to  $f_*\O_X$ as $\O_Y$-modules 
which will prove that $f_*\O_X$ is associated to the representation $W$ of $G$.

Without loss of generality we assume $Y$ to be affine say $Y = \spec A$.
Then $X$ is also affine say $\spec B$ for some finite $A$-module $B$.
$G$ has a left action on $B$ and $B^G =A$.
So we need to prove that the set of $G$-invariants $(B \otimes_k W)^G$ in 
$(B \otimes_k W)$ is isomorphic to $B$ as $A$-modules.
We take the map 
$$\theta : B \to B \otimes_k W$$
which sends $b \in B$ to the element 
$$\sum_{g \in G} (g.b) [g]  \in B \otimes_k W\,.$$
Clearly image of $\theta$ lies inside the set of $G$-invariants $(B \otimes_k W)^G$.
Also $\theta$ is an injective morphism of $A$-modules.
So we get an injective morphism of $A$-modules
$$ \overline{\theta} : B \to (B \otimes_k W)^G \,.$$
Now if we have an element
$$\sum_{g \in G} b_g [g] \in (B \otimes_k W)^G$$
then we should have 
$$h.b_g = b_{hg} \quad \text{ for all } \quad h,g \in G\,.$$
So letting $b = b_e$, we have $\theta(b) = \sum_{g \in G} b_g [g]$
which shows that $\overline\theta$ is surjective and hence an isomorphism.
So this proves that $E'=P^*W$ and hence proves (a).

To prove (b), let $V \in \cnf(Y)$ and $W_0$ be maximal trivial subbundle of $f^*V \in \cnf(X)$.
Then $f^*V$ corresponds to a representation 
$\sigma : \pi^S(X,x_0) \to GL(T)$
and $W_0$ corresponds to the maximal trivial subrepresentation 
of $\sigma$ say $\sigma_0 : \pi^S(X,x_0) \to GL(T_0)$,
$T_0 \subset T$ are $k$-vector spaces.
Now let $g \in G$.
Then $g^*W_0$ is again a trivial subbundle of $f^*V$ and 
hence corresponds to a trivial subrepresentation 
of $\sigma$ say $\sigma'_0 : \pi^S(X,x_0) \to GL(T'_0)$ 
If $g^*W_0 \neq W_0$ then $T_0 \neq T'_0$.
But then $\pi^S(X,x_0)$ acts trivially on
$T_0 + T'_0$ which contradicts the maximality of $T_0$.
So we must have $g^*W_0 = W_0$ for any $g \in G$.
Thus $W_0$ is $G$-equivariant.

We take the locally free sheaf $V_0 :=(f_*W_0)^G$ on $Y$.
We have a natural map $\alpha : f^*V_0 \to W_0$.
We claim that this map is an isomorphism of sheaves on $X$.
Let $y \in Y$ and $x$ be a point in the fiber  $f^{-1}(y)$.
Let $A$ denote the local ring $\O_{Y,y}$ and $B$ denote the semilocal ring 
$\O_X \otimes_{\O_Y} A$.
Then $B$ is a finite $A$-module.
The group $G$ acts on $B$ and $B^G = A$.
Let $M$ be a $B$-module which corresponds to the locally 
free sheaf $W_0$ on $Y$.
So the localization of the map $\alpha$ at $y$ corresponds to 
the map $\alpha_y : M^G \otimes_A B \to M$ 
which sends $m \otimes b$ to $bm$.
We show that $\alpha_y$ is an isomorphism which will prove that $\alpha$ is an isomorphism.

There is an exact sequence of $A$-modules 
$$ 0 \to M^G \to M \to \bigoplus_{g \in G} M$$
where the middle map is inclusion and
the last map sends $m$ to $(g.m - m)_{g \in G}$.
Let $\widehat{A}$ denotes the completion of $A$ with respect to 
its maximal ideal.
Applying the functor $\_ \otimes_A \widehat{A}$ to the exact sequence above we get another exact sequence
$$0 \to M^G \otimes_A \widehat{A} \to M \otimes_A \widehat{A} 
\to \bigoplus_{g \in G} \left(M \otimes_A \widehat{A}\right)$$
as $\widehat{A}$ is flat over $A$.
So we conclude that the following natural map 
$$ \widehat{M^G} \xrightarrow{\simeq} \widehat M ^G$$
is an isomorphism.
Now the ring $\widehat B := B \otimes_A \widehat A$ decomposes as
$$\widehat B \cong \bigoplus_{g \in G} \widehat B_{g \cdot x} \cong \bigoplus_{g \in G} \widehat A$$
where $\widehat B_{g \cdot x}$ is completion of $B$ at the 
maximal ideal corresponding to the point $g.x$.
The action of $G$ on $\widehat B$ becomes 
a transitive action of $G$ on $\bigoplus_{g \in G} \widehat A$,
acting by permutation of factors.
Applying the functor $M \otimes_B \_$ to the above isomorphism 
we get that 
$$ \widehat M \cong \bigoplus_{g \in G} \widehat {M_{g \cdot x}}$$
where $M_{g \cdot x}$ is the localization of $M$ at the maximal
ideal corresponding to the point $g \cdot x$.
Each $\widehat M_{g \cdot x}$ is isomorphic to $\widehat M_x$.
So $\widehat M \cong \bigoplus_{g \in G} \widehat M_{x}$ with $G$ acting on it transitively by permutations of factors.
So taking $G$-invariants, it easily follows that we have an isomorphism
$$\widehat M^G \otimes_A \widehat B \xrightarrow{\simeq} \widehat M.$$
which sends $m \otimes b \in \widehat M^G \otimes_A \widehat B$  to $bm$.
Moreover we have the following commutative diagram 
consisting of natural maps
$$
\begin{tikzcd}
\widehat {M^G} \otimes_A \widehat B \arrow{d}{\simeq} \arrow{r}{\simeq}  & \widehat M^G \otimes_A \widehat B \arrow{r}{\simeq}  & \widehat M \arrow{d}{\simeq}\\
(M^G \otimes_A B) \otimes_A \widehat A \arrow{rr}{\alpha_y \otimes Id_{\widehat A}} & & M \otimes_A \widehat A
\end{tikzcd}
$$
It follows from the diagram that $\alpha_y \otimes Id_{\widehat A}$ is an isomorphism and hence 
$\alpha_y$ is an isomorphism.
Hence $\alpha$ is an isomorphism which completes the proof of (b).

Now to prove (c), Let $W$ be a numerically flat 
locally free sheaf on $X$.
Taking $V = f_*W$, we have seen in proof of Lemma \ref{injective} that $V$ is a numerically flat locally free sheaf on $Y$ and $f^*V = \bigoplus_{g \in G} g^*W$.
So clearly $W$ is a quotient of $f^*V$, which proves (c).
This completes the proof of exactness of the sequence \eqref{ses}.
\end{proof}

\section{Smoothness of Generalized Kummer Variety}
All results in this section are very well known to experts.
However we include these for sake of completeness.

Let $A$ be an abelian surface over $k$ and $n$ be a positive integer.
Let $e \in A$ denotes the identity element of $A$.
The $n$-th symmetric product $S^n(A)$ of $A$ 
is the quotient $A^n/S_n$ of the $n$-fold product of $A$ 
by the symmentric group $S_n$.
It is a normal projective variety of dimension $2n$ over $k$.
The Hilbert scheme $A^{[n]}$ of closed subschemes of length $n$ on A is a smooth projective variety of dimension $2n$ over $k$.
Note that $A^{[1]} \cong A$, so we also assume that $n\geq 2$.
We have the Hilbert-Chow morphism
$$\varphi : A^{[n]} \to S^n(A)$$ 
which is given by sending $Z \in A^{[n]}$ to 
$$ \sum_{p \in \supp(Z)} \l(\O_{Z,p})[p] \in S^n(A), $$
where $\supp(Z) = \{p \in A : \O_{Z,p} \neq 0\}$
denotes the support of the $0$-cycle $Z$ in $A$ and 
$l(\O_{Z,p})$ the length of the local ring $\O_{Z,p}$ as
a module over itself.
Hilbert-Chow morphism is explained in details in \cite[Chapter 7]{FGA}.
Let $\psi : A^n \to S^n(A)$ be the quotient map.
We have the addition morphism $+ :A^n \to A$ which factors through
$S^n(A)$ giving the morphism $\Sigma : S^n(A) \to A$.
Let $\rho := \Sigma \circ \varphi : A^{[n]} \to A$ be the composition morphism. 
The fiber $K^{[n]} := \rho^{-1}(e)$ is the $n$-th generalized Kummer variety associated to $A$.
For arbitrary $n$, $K^{[n]}$ is not necessarily smooth.
An example is shown in \cite{Ste09} in the case char$(k)=2$.
However we will see that if char$(k) \nmid n$ then 
$K^{[n]}$ is smooth projective variety of dimension 
$2n-1$ over $k$.

We need the following well known result for later use.
Its proof is left to the reader.
\begin{lemma}\label{fiber wise isomorphism}
Let $X,Y,Z$ be Noetherian projective schemes over an algebraically closed field $k$ and $Y$ be integral.
Let $f: X \to Y$ be a morphism of schemes over $Z$, that is, 
we have the commutative diagram
$$
\begin{tikzcd}
X \arrow{r}{f} \arrow{rd}&  Y \arrow{d}{}\\
&Z
\end{tikzcd}
$$
Suppose $f$ is finite and for each 
closed point $z \in Z$ the induced morphism on fibers
$f_z : X_z \to Y_z$ is an isomorphism .
Then $f$ is an isomorphism.
\end{lemma}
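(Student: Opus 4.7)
The plan is to reduce the statement to showing that the natural map $\phi:\O_Y \to f_*\O_X$ of $\O_Y$-algebras is an isomorphism. Since $f$ is finite we have $X \cong \underline{\spec}_Y f_*\O_X$, so an isomorphism $\phi$ produces an isomorphism $f$. I will verify surjectivity and injectivity of $\phi$ in turn.

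For surjectivity I would first show $f$ is surjective on underlying spaces. The image $f(X)$ is closed in $Y$ since $f$ is finite and hence proper, and for every closed point $y\in Y$, setting $z:=\pi_Y(y)\in Z$, one has $y\in Y_z = f_z(X_z)\subset f(X)$ because $f_z$ is an isomorphism. Since closed points are dense in the projective $k$-scheme $Y$, this forces $f(X)=Y$. Next, for each closed $y\in Y$ the scheme-theoretic fiber satisfies $f^{-1}(y)\cong f_z^{-1}(y)=\spec\kappa(y)$: the first identification uses that $\spec\kappa(y)\to Y$ factors through the closed subscheme $Y_z$ of $Y$ because $f$ lies over $Z$, and the second is the fiber-isomorphism hypothesis. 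Hence $(f_*\O_X)_y\otimes_{\O_{Y,y}}\kappa(y)\cong\kappa(y)$ is generated by the image of $1$, so Nakayama applied to the finitely generated $\O_{Y,y}$-module $(f_*\O_X)_y$ gives $\phi_y$ surjective. Because closed points are dense and the cokernel of $\phi$ is coherent, it vanishes everywhere.

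For injectivity let $\eta$ denote the generic point of $Y$. Since $f$ is surjective, $f^{-1}(\eta)$ is nonempty, so $(f_*\O_X)_\eta$ is a nonzero finite $K(Y)$-algebra and the unital ring map $\phi_\eta:K(Y)\to(f_*\O_X)_\eta$ is injective because its source is a field. Thus $\ker(\phi)$ is a coherent ideal subsheaf of $\O_Y$ with vanishing stalk at $\eta$; but on the integral scheme $Y$ any nonzero ideal subsheaf of $\O_Y$ has nonzero generic stalk, so $\ker(\phi)=0$. Combining with surjectivity, $\phi$ is an isomorphism and so is $f$.

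The argument is largely bookkeeping, and I expect the only mildly delicate step to be the identification $f^{-1}(y)=f_z^{-1}(y)$ of scheme-theoretic fibers, which crucially uses both that $f$ is a morphism over $Z$ and that $z$ is a closed point of $Z$ so that $Y_z$ embeds as a closed subscheme of $Y$.
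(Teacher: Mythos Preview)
Your proof is correct. The paper does not actually prove this lemma---it explicitly states that the proof is left to the reader---so there is no approach in the paper to compare yours against; you have simply supplied the omitted argument.

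One small point worth making explicit: when you set $z:=\pi_Y(y)$ for a closed point $y\in Y$ and then invoke the fiber-isomorphism hypothesis at $z$, you are using that $z$ is a \emph{closed} point of $Z$. This holds because $Y$ and $Z$ are of finite type over the algebraically closed field $k$, so closed points are precisely the $k$-rational points and these are preserved under any $k$-morphism. You allude to this at the end, but it deserves to be stated where it is first used. Apart from that, the reduction to the sheaf map $\phi:\O_Y\to f_*\O_X$, the Nakayama argument at closed points for surjectivity, and the generic-point argument on the integral scheme $Y$ for injectivity are all standard and sound.
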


\subsection{A Galois cover of the Hilbert scheme}
The group scheme $A$ acts on itself by translation, 
say $T_a : A \to A$ denotes the translation by an element $a \in A$.
There is an induced action of $A$ on the Hilbert Scheme $A^{[n]}$.
Under this action each element $a \in A$ sends a closed point 
$Z$ of the Hilbert scheme $A^{[n]}$ to its pullback $T_a^*Z$,
viewing $Z$ as a closed subscheme of 
$A$ of length $n$.
We have the commutative diagram
$$
\begin{tikzcd}
A^{[n]} \arrow{r}{T_a^*} \arrow{d}{\rho} & A^{[n]} \arrow{d}{\rho}\\
A \arrow{r}{T_{na}} & A
\end{tikzcd}
$$
The action of $A$ on $A^{[n]}$ gives the morphism 
of group schemes $\nu : A \times_k A^{[n]} \to A^{[n]}$ 
which sends a closed point 
$(a,Z) \in A \times_k A^{[n]}$ to $T_a^*Z\in A^{[n]}$.
Let $\textbf{n} : A \to A$ be the multiplication by $n$ morphism.
\begin{lemma}\label{cartesian}
The following commutative diagram is Cartesian:
\begin{equation}\label{square}
\begin{tikzcd}
A \times_k K^{[n]} \arrow{r}{\nu} \arrow{d}{pr_A} & A^{[n]} \arrow{d}{\rho}\\
A \arrow{r}{\textbf{n}} & A
\end{tikzcd}
\end{equation}
where $pr_A :A \times_k K^{[n]} \to A$ is the 
natural projection onto $A$.
\end{lemma}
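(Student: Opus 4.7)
The plan is to exhibit an explicit two-sided inverse to the natural morphism
$$\Phi : A \times_k K^{[n]} \longrightarrow A^{[n]} \times_A A$$
induced by $\nu$ and $pr_A$, where the right-hand side denotes the fiber product of $\rho : A^{[n]} \to A$ and $\textbf{n} : A \to A$; once this is done, \eqref{square} is Cartesian. Commutativity of \eqref{square} is essentially already recorded: for a $T$-point $(a, Z)$ of $A \times_k K^{[n]}$, the commutative diagram displayed immediately before the lemma gives
$$\rho(\nu(a, Z)) = \rho(T_a^* Z) = T_{na}(\rho(Z)) = T_{na}(e) = na = \textbf{n}(pr_A(a, Z)),$$
which is precisely the condition of being a $T$-point of $A^{[n]} \times_A A$ and thereby yields $\Phi$.

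For the inverse I define $\Psi : A^{[n]} \times_A A \to A \times_k K^{[n]}$ functorially on $T$-points by $(Z, a) \mapsto (a, T_{-a}^* Z)$. This is well defined because a $T$-point $(Z, a)$ of the fiber product satisfies $\rho(Z) = na$ in $A(T)$, whence
$$\rho(T_{-a}^* Z) = T_{-na}(\rho(Z)) = T_{-na}(na) = e,$$
so $T_{-a}^* Z$ is a $T$-point of $\rho^{-1}(e) = K^{[n]}$. Scheme-theoretically, $\Psi$ is assembled from the two projections of $A^{[n]} \times_A A$ together with the composite $\nu \circ ([-1] \times \mathrm{id}_{A^{[n]}})$; the factorization of the $A^{[n]}$-component through the closed immersion $K^{[n]} \hookrightarrow A^{[n]}$ is forced by the displayed identity and the universal property of $\rho^{-1}(e)$.

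Finally, $\Phi \circ \Psi = \mathrm{id}$ and $\Psi \circ \Phi = \mathrm{id}$ follow immediately from the group-action relation $T_a^* \circ T_{-a}^* = T_{-a}^* \circ T_a^* = T_0^* = \mathrm{id}$. Alternatively, one could apply Lemma \ref{fiber wise isomorphism}: $\Phi$ is a morphism of projective schemes over $A$ whose fiber over each closed point $a \in A$ is the bijection $K^{[n]} \to \rho^{-1}(na)$, $Z \mapsto T_a^* Z$, with set-theoretic inverse $T_{-a}^*$. There is no serious obstacle; the content of the statement is simply that translation by $A$ is a genuine group action on $A^{[n]}$ which intertwines $\rho$ with multiplication by $n$ on the base.
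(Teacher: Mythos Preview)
Your proof is correct and takes a genuinely different route from the paper's. The paper defines the fiber product $B = A^{[n]} \times_A A$, obtains the comparison map $\Phi : A \times_k K^{[n]} \to B$ from the universal property, checks that $\Phi$ is finite (projective with finite fibers), observes that each fiber $\Phi_a$ is an isomorphism (it is $\Phi_e$ transported by the translation $T_{-a}$), and then invokes Lemma~\ref{fiber wise isomorphism}. You instead write down the inverse $\Psi$ directly via the action of $-a$ and verify $\Phi\circ\Psi=\mathrm{id}$, $\Psi\circ\Phi=\mathrm{id}$ using nothing more than the group-action identities. Your argument is shorter and avoids two side verifications the paper's method needs: that $\Phi$ is finite, and that the target $B$ is integral (a hypothesis of Lemma~\ref{fiber wise isomorphism} which the paper does not explicitly check at this point). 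The paper's approach, on the other hand, illustrates a reusable technique and is what motivates stating Lemma~\ref{fiber wise isomorphism} in the first place. Your closing ``alternatively'' paragraph gestures at the paper's method but, as written, omits the finiteness check and speaks of a set-theoretic bijection where a scheme isomorphism on fibers is required; since your main argument already settles the matter, this aside is harmless but could be tightened or dropped.
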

\begin{proof}
Let $B$ be the 
fiber product as in the Cartesian diagram:
$$
\begin{tikzcd}
B \arrow{r}{\beta} \arrow{d}{\alpha} & A^{[n]} \arrow{d}{\rho}\\
A \arrow{r}{\textbf{n}} & A
\end{tikzcd}
$$
The morphisms $pr_A :A \times_k K^{[n]} \to A$  and 
$\nu : A \times_k K^{[n]} \to A^{[n]}$ gives a unique morphism
$$\Phi : A \times_k K^{[n]} \to B$$
satisfying $\beta \circ \Phi = \nu$ and 
$\alpha \circ \Phi = pr_A$.
Any fiber of $\nu$ over a closed point of $A^{[n]}$ is finite 
and so is true for $\beta$.
So fiber of $\Phi$ over any closed point of $B$ is finite.
Moreover $\Phi$ is projective, so $\Phi$ is a finite morphism.
Next we consider the commutative triangle 
\begin{equation}\label{triangle}
\begin{tikzcd}
A \times_k K^{[n]} \arrow{r}{\Phi} \arrow{rd}{pr_A}&  B \arrow{d}{\alpha}\\
& A
\end{tikzcd}
\end{equation}
For any closed point $a \in A$, 
pulling back the triangle \eqref{triangle} along the inclusion $\iota_a :\{a\} \hookrightarrow A$,
we get a map $\Phi_a : \{a\} \times_k K^{[n]} \to B_a$ between the fibers over $a$.
By definition of $K^{[n]}$, $\Phi_e$ is an isomorphism.
Again $\Phi_a$ is just the pullback of $\Phi_e$ along the 
translation $T_{-a} : A \to A$.
It follows that $\Phi_a$ is also an isomorphism.
So we are in the following situation:
we have commutative diagram \eqref{triangle}
such that $\Phi$ is finite and
$\Phi_a$ is isomorphism for all closed points $a \in A$.
Using Lemma \ref{fiber wise isomorphism}, it follows that $\Phi$ is an isomorphism.
Consequently \eqref{square} is a Cartesian diagram.
\end{proof}

Now if $p \nmid n$,
then $\textbf{n} :A \to A$ is a Galois cover with Galois 
group $A[n]$, the group of $n$-torsion points.
It follows from Lemma \ref{cartesian} that 
$\nu$ is also a Galois cover with Galois group $A[n]$.
So $A \times_k K^{[n]}$ is a Galois cover of 
the Hilbert scheme $A^{[n]}$.

\subsection{Smoothness of Generalized Kummer Variety}
Let char$(k) \nmid n$. 
Then $A \times_k K^{[n]}$ is a Galois cover of $A^{[n]}$
and hence $A \times_k K^{[n]}$ is smooth.
As a consequence we get that $K^{[n]}$ is smooth.

\section{Fundamental group scheme of generalized Kummer variety}
We consider the abelian surface $A$ over the algebraically 
closed field $k$ of characteristic $p>3$ and $n$ be a 
positive integer not divisible by $p$.
Let $e \in A$ be the identity element of $A$.
We can easily check that (or by \cite[Theorem 6.1]{Lan12}) we have the following remark.
\begin{remark}\label{abelian}
The $S$-fundamental group scheme $\pi^S(A,e)$ of an abelian variety $A$ is abelian.
\end{remark}
In \cite[section 4.3]{PS19} authors have defined a morphism of group schemes
$$\delta : (\pi^S(A,e))^n \to \pi^S(A^{[n]},n[e])$$
and this factors as:
\begin{equation}
\begin{tikzcd}\label{delta factor}
(\pi^S(A,e))^n \arrow{dr}{m} \arrow{rr}{\delta}
& & \pi^S(A^{[n]},n[e])  \\
& \pi^S(A,e)_{ab} \arrow{ur}{\overline{\delta}}
\end{tikzcd}
\end{equation}
where $m$ is the composition of the natural homomorphism
$(\pi^S(A,e))^n \to ((\pi^S(A,e))_{ab})^n$ followed by the 
product morphism $((\pi^S(A,e))_{ab})^n \to (\pi^S(A,e))_{ab}$.
The map $\overline{\delta}$ is an isomorphism 
by \cite[Theorem 5.3.11]{PS19},.
In view of Remark \ref{abelian}, we rewrite diagram \ref{delta factor}
as 
\begin{equation}
\begin{tikzcd}\label{delta factor 2}
(\pi^S(A,e))^n \arrow{dr}{m} \arrow{rr}{\delta}
& & \pi^S(A^{[n]},n[e])  \\
& \pi^S(A,e) \arrow{ur}{\overline{\delta}}
\end{tikzcd}
\end{equation}
Recall that $\psi : A^n \to S^n(A)$ is the quotient map and 
$\varphi : A^{[n]} \to S^n(A)$ is the Hilbert-Chow morphism.
\begin{lemma}\label{main lemma}
We have the following commutative diagram.
\begin{equation}
\begin{tikzcd}\label{delta triangle}
(\pi^S(A,e))^n \arrow{dr}{\delta} \arrow{rr}{\widetilde{\psi}}
& & \pi^S(S^n(A),\psi(e,\dots,e))  \\
& \pi^S(A^{[n]},n[e]) \arrow{ur}{\widetilde{\varphi}}
\end{tikzcd}
\end{equation}
\end{lemma}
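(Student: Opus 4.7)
The plan is to reduce the commutativity of the triangle to a scheme-level identity on a dense open of $A^n$, and then extend it using properties of the $S$-fundamental group scheme. By Tannakian duality, the equality $\widetilde{\varphi} \circ \delta = \widetilde{\psi}$ amounts to exhibiting, for each $E \in \cnf(S^n A)$, a natural isomorphism of pullback bundles on $A^n$, using the identification $(\pi^S(A,e))^n \cong \pi^S(A^n, (e,\dots,e))$ of Theorem \ref{product}.

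Let $V \subset A^n$ denote the complement of the big diagonal and $U := \psi(V) \subset S^n(A)$ the locus of reduced $n$-tuples. The Hilbert--Chow morphism restricts to an isomorphism $\varphi^{-1}(U) \xrightarrow{\sim} U$, so the formula
\[
r := \varphi^{-1} \circ \psi\vert_V : V \longrightarrow A^{[n]}
\]
defines a genuine morphism of schemes satisfying the tautological relation $\varphi \circ r = \psi\vert_V$. Since $\dim A = 2$, each component of the big diagonal has codimension $2$ in the smooth variety $A^n$, and Langer's restriction results for numerically flat bundles on smooth projective varieties (cf.\ \cite{Lan11,Lan12}) imply that the inclusion $V \hookrightarrow A^n$ induces an isomorphism of $S$-fundamental group schemes after choosing a common basepoint $v_0 \in V$.

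Applying $\pi^S(-)$ to the identity $\varphi \circ r = \psi\vert_V$ yields $\widetilde{\varphi} \circ \widetilde{r} = \widetilde{\psi\vert_V}$ as maps out of $\pi^S(V, v_0)$. Unwinding the construction of $\delta$ in \cite[\S4.3]{PS19}, one sees that $\delta$ is precisely $\widetilde{r}$, precomposed with the inverse of the codimension-two isomorphism $\pi^S(V, v_0) \xrightarrow{\sim} \pi^S(A^n, v_0)$ and then with the product identification of Theorem \ref{product}. Combining these with the functoriality of change of basepoint then produces $\widetilde{\varphi} \circ \delta = \widetilde{\psi}$, as desired.

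The main obstacle is identifying the abstract morphism $\delta$ of \cite{PS19} with the geometric $\widetilde{r}$ arising from the codimension-two construction; if these turn out to be packaged differently in \emph{loc.\ cit.}, the commutativity would instead need to be verified directly by comparing the two pullback functors on $\cnf(S^n A)$, namely $E \mapsto \delta^* \widetilde{\varphi}^* E$ and $E \mapsto \widetilde{\psi}^* E$, and appealing to faithfulness of $\psi^*$ (since $\psi$ is surjective) to transport the comparison back to $S^n(A)$. A secondary nuisance is basepoint bookkeeping: the canonical basepoint $(e,\dots,e)$ lies on the big diagonal and not in $V$, so the codimension-two identification has to be set up at an auxiliary basepoint $v_0 \in V$ and then transported along a change-of-basepoint isomorphism (well-defined up to inner automorphism) in a manner compatible with the product identification of Theorem \ref{product}.
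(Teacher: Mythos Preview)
Your scheme-level strategy runs into a genuine technical obstruction: in the framework of \cite{Lan11,Lan12} used throughout, the $S$-fundamental group scheme is only defined for \emph{complete} connected reduced $k$-schemes, so $\pi^S(V,v_0)$ for the open subvariety $V\subset A^n$ has no meaning as stated, and there is no ``codimension-two isomorphism $\pi^S(V)\xrightarrow{\sim}\pi^S(A^n)$'' in \cite{Lan11,Lan12} to invoke. Consequently the map $\widetilde r$ cannot even be formed, and the identification of $\delta$ with $\widetilde r$ collapses before the packaging question you flag can arise. (Your basepoint worry is a symptom of the same issue: the natural basepoint lies outside $V$ precisely because one has left the complete setting.)

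The paper proceeds essentially along the lines of your fallback. It recalls from \cite{PS19} that $\delta$ is Tannaka dual to the explicit functor
\[
\G:\cnf(A^{[n]})\longrightarrow\cnf(A^n),\qquad \G(E)=\bigl(j_*\,\psi^*\varphi_*(E\vert_{\varphi^{-1}(W)})\bigr)^{\vv},
\]
where $W\subset S^n(A)$ is the open set of \cite[\S3.3]{PS19} and $j:\psi^{-1}(W)\hookrightarrow A^n$ has complement of codimension $\geq 2$. For $E\in\cnf(S^n(A))$ one computes $\G(\varphi^*E)$ directly: since $\varphi$ is birational projective onto the normal variety $S^n(A)$ one has $\varphi_*\O_{A^{[n]}}=\O_{S^n(A)}$, whence $\varphi_*\bigl((\varphi^*E)\vert_{\varphi^{-1}(W)}\bigr)=E\vert_W$; pulling back by $\psi$ and applying $j_*(-)^{\vv}$ then returns $\psi^*E$, because $\psi^*E$ is already locally free and the complement of $\psi^{-1}(W)$ has codimension $\geq 2$. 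Hence $\G\circ\varphi^*=\psi^*$ as functors, and dually $\widetilde\varphi\circ\delta=\widetilde\psi$. No open-variety fundamental group and no auxiliary basepoint are needed.
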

\begin{proof}
To prove this lemma, we first recall how the map $\delta$ is defined 
in \cite{PS19}.
Let $W \subset S^n(A)$ be the open subset as defined in 
\cite[section 3.3]{PS19} and $V$ denote the open 
subset $\varphi^{-1}(W)$ in $A^{[n]}$.
Then $A^{[n]} \setminus V$ has codimension $2$ in $A^{[n]}$
and $A^n \setminus \psi^{-1}(W)$ has codimension $\geq 4$ in $A^n$.
Let $j : \psi^{-1}(W) \hookrightarrow A^n$ denote the inclusion.
The morphism $\G$ defined by 
$$\G(E) := (j_* \psi^* \varphi_*(E \vert_V))^{\vv} 
\quad \text{ for an object } E \in \cnf(A^{[n]})$$
is a functor between Tannakian categories
$$\G : \cnf(A^{[n]}) \to \cnf(A^n)\,.$$
The functor $\G$ induces the map $\delta : (\pi^S(A,e))^n \to \pi^S(A^{[n]},n[e])$
on $S$-fundamental group schemes
(see \cite[section 4.3]{PS19}).
In \cite[chapter III, Corollary 11.4]{Ha}, it is proved that 
for a birational projective morphism $h : X \to Y $ of Noetherian integral schemes with $Y$ being normal, we have $h_*(\O_X) = \O_Y$.
Using this result for the birational morhphism $\varphi$ and the fact that $X \setminus \psi^{-1}(W)$ has codimension $\geq 2$ in $X$, we get that 
$\G \varphi^*(E) = \psi^*(E)$
for any $E \in \cnf(S^n(A))$.
This shows that $\widetilde{\varphi} \circ \delta = \widetilde{\psi}$ which proves the lemma.
\end{proof}

\subsection{$S$-fundamental group scheme of $K^{[n]}$}
\begin{theorem}\label{theorem1}
Let $k$ be an algebraically closed field of
characteristic $p>3$ and $n$ be an integer 
$\geq 2$ such that $p \nmid n$.
Then the $S$-fundamental group scheme $\pi^S(K^{[n]},n[e])$ of the $n$-th generalized Kummer variety $K^{[n]}$ over $k$ is trivial.
\end{theorem}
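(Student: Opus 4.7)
The plan is to apply Theorem \ref{main theorem} to the \'etale Galois cover $\nu\colon A \times_k K^{[n]} \to A^{[n]}$, which has Galois group $A[n]$ by Lemma \ref{cartesian} together with $p \nmid n$, combine the result with Theorem \ref{product}, and then identify $\widetilde{\rho}\colon \pi^S(A^{[n]},n[e]) \to \pi^S(A,e)$ as the inverse of the isomorphism $\overline{\delta}$. Theorem \ref{main theorem} gives an exact sequence
\begin{equation*}
1 \to \pi^S(A \times_k K^{[n]},(e,n[e])) \xrightarrow{\widetilde{\nu}} \pi^S(A^{[n]},n[e]) \xrightarrow{P} A[n] \to 1,
\end{equation*}
and Theorem \ref{product} identifies the leftmost group with $\pi^S(A,e) \times_k \pi^S(K^{[n]},n[e])$. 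In particular, $\widetilde{\nu}$ is a closed immersion of this product into $\pi^S(A^{[n]},n[e])$.

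Next I want to show that $\widetilde{\rho} \circ \overline{\delta} = \mathrm{id}_{\pi^S(A,e)}$. Since $\rho = \Sigma \circ \varphi$, Lemma \ref{main lemma} gives
$$\widetilde{\rho} \circ \delta \;=\; \widetilde{\Sigma} \circ \widetilde{\varphi} \circ \delta \;=\; \widetilde{\Sigma} \circ \widetilde{\psi} \;=\; \widetilde{\Sigma \circ \psi}.$$
Now $\Sigma \circ \psi\colon A^n \to A$ is the $n$-fold addition, and by functoriality together with Theorem \ref{product} it induces on $S$-fundamental groups the $n$-fold multiplication on the abelian group scheme $\pi^S(A,e)$, which is precisely the map $m\colon \pi^S(A,e)^n \to \pi^S(A,e)$ of diagram \ref{delta factor 2} (using Remark \ref{abelian} to drop the abelianization). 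Combined with the factorization $\delta = \overline{\delta} \circ m$ this yields $\widetilde{\rho} \circ \overline{\delta} \circ m = m$; cancelling the split epimorphism $m$ (with section $g \mapsto (g,1,\ldots,1)$) gives $\widetilde{\rho} \circ \overline{\delta} = \mathrm{id}$. Since $\overline{\delta}$ is an isomorphism, so is $\widetilde{\rho}$.

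Finally, applying $\pi^S$ to the Cartesian square of Lemma \ref{cartesian} yields the identity $\widetilde{\rho} \circ \widetilde{\nu} = \widetilde{\textbf{n}} \circ \widetilde{pr}_A$ of morphisms $\pi^S(A,e) \times_k \pi^S(K^{[n]},n[e]) \to \pi^S(A,e)$, where $\widetilde{pr}_A$ is projection onto the first factor by Theorem \ref{product}. Restricting to the closed subgroup $\{1\} \times_k \pi^S(K^{[n]},n[e])$, the right-hand side vanishes; since $\widetilde{\rho}$ is an isomorphism, $\widetilde{\nu}$ also vanishes on this subgroup. But $\widetilde{\nu}$ is a closed immersion, so the subgroup must be trivial, i.e., $\pi^S(K^{[n]},n[e])$ is trivial.

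The main obstacle I anticipate is the identification $\widetilde{\rho} = \overline{\delta}^{-1}$: it requires matching the abstractly defined product map $m$ from diagram \ref{delta factor 2} with the geometric map induced by addition $A^n \to A$ (via Theorem \ref{product}), and using Lemma \ref{main lemma} to bridge $\delta$ and $\widetilde{\rho}$. Everything afterwards is a short diagram chase against the Cartesian square, exploiting that the first factor of $\pi^S(A,e) \times_k \pi^S(K^{[n]},n[e])$ already accounts for all of $\pi^S(A^{[n]},n[e]) \cong \pi^S(A,e)$.
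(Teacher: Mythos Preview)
Your proposal is correct and follows essentially the same route as the paper. Both arguments hinge on showing that $\widetilde{\rho}$ is an isomorphism via Lemma~\ref{main lemma} (giving $\widetilde{\rho}\circ\delta = m$) combined with the factorization $\delta=\overline{\delta}\circ m$ from diagram~\eqref{delta factor 2} and the fact that $\overline{\delta}$ is an isomorphism; the paper draws this as a commutative triangle while you spell out the cancellation of the split epimorphism $m$. The only cosmetic difference is in the final step: the paper places the exact sequence for $\nu$ above the one for $\mathbf{n}$ and invokes (implicitly) the five lemma to conclude that $\widetilde{pr}_A$ is an isomorphism, whereas you use directly the commutativity $\widetilde{\rho}\circ\widetilde{\nu}=\widetilde{\mathbf{n}}\circ\widetilde{pr}_A$ together with injectivity of $\widetilde{\nu}$ to kill the factor $\pi^S(K^{[n]},n[e])$.
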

\begin{proof}
We have the Cartesian diagram \eqref{square}.
\[
\begin{tikzcd}
A \times_k K^{[n]} \arrow{r}{\nu} \arrow{d}{pr_A} & A^{[n]} \arrow{d}{\rho}\\
A \arrow{r}{\textbf{n}} & A
\end{tikzcd}
\]
As $p \nmid n$, $\textbf{n}$ is a Galois cover with Galois 
group $A[n]$, the group of $n$-torsion points
and $\nu$ is also a Galois cover with Galois group $A[n]$.
Moreover $A$, $K^{[n]}$ and $A^{[n]}$ all are smooth projective variety
as we have seen in section 4.
So using Theorem \ref{main theorem} we get following commutative diagram 
whose rows are exact.
\[
\begin{tikzcd}
1 \arrow{r}{} & \pi^S(A \times_k K^{[n]},(e,n[e])) \arrow{r}{\widetilde{\nu}}
\arrow{d}{\widetilde{pr}_A} & \pi^S(A^{[n]},n[e]) \arrow{d}{\widetilde \rho}
\arrow{r}{} & A[n] \ar[d,-,double equal sign distance,double]  \arrow{r}{} & 1\\
1 \arrow{r}{} & \pi^S(A,e) \arrow{r}{\widetilde{\textbf{n}}} & \pi^S(A,e) \arrow{r}{} & A[n] \arrow{r}{} &  1
\end{tikzcd}
\]
By Theorem \ref{product} we have the isomorphism 
$$ \widetilde{pr}_A \times \widetilde{pr}_{K^{[n]}} : \pi^S(A \times_k K^{[n]}, (e, n[e]))
\xrightarrow{\sim} \pi^S(A,e) \times_k \pi^S( K^{[n]},n[e])$$
So to prove that $\pi^S( K^{[n]},n[e])$ is trivial, it is enough to 
show that $\widetilde{\rho}$ is an isomorphism.
Now the map $\widetilde{\rho}$ is 
the composition $\widetilde{\Sigma} \circ \widetilde{\varphi}$
and by Lemma \ref{main lemma} we have 
$\widetilde{\varphi} \circ \delta = \widetilde{\psi}$.
So combining we have 
$$\widetilde{\rho} \circ \delta 
=\widetilde{\Sigma} \circ \widetilde{\varphi} \circ \delta 
= \widetilde{\Sigma} \circ \widetilde{\psi} = \widetilde{+}\,.$$
Now the map on $S$-fundamental group schemes induced by the addition
morphism $ + : A^n \to A$ is the product morphism
$m : (\pi^S(A,e))^n \to \pi^S(A,e)$, 
identifying $\pi^S(A^n,(e,\dots,e))$ as $(\pi^S(A,e))^n$.
In other words we have $\widetilde{+} = m$.
So we have the commutative diagram :
$$
\begin{tikzcd}
(\pi^S(A,e))^n \arrow{dr}{m} \arrow{r}{\delta} & \pi^S(A^{[n]},n[e]) 
\arrow{r}{\widetilde{\rho}} & \pi^S(A,e)\\
& \pi^S(A,e) \arrow{u}{\overline{\delta}} \ar[ur,-,double equal sign distance,double]
\end{tikzcd}
$$
Since $\overline{\delta}$ is an isomorphism so it follows that $\widetilde{\rho}$ is 
an isomorphism. 
This proves that $\pi^S(K^{[n]},n[e])$ is trivial.
\end{proof}

\subsection{Nori's and \'etale fundamental group schemes of $K^{[n]}$}
We will use \cite[Lemma 6.2]{Lan11} for this section.
\begin{lemma}[Lemma 6.2, \cite{Lan11}]\label{Lan116.2}
Let $X$ be a complete connected reduced $k$-scheme 
and let $x \in X$ be a $k$-rational points.
Then there exist natural faithfully flat homomorphisms 
$$ \pi^S(X,x) \to \pi^N(X,x) \to \pi^{\et}(X,x)$$
of affine group schemes.
\end{lemma}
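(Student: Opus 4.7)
The plan is to realize each of the two arrows as the Tannakian dual of a fully faithful, exact, $\otimes$-compatible inclusion of neutral Tannakian subcategories inside $\cnf(X)$, and then invoke the standard criterion that such an inclusion, closed under taking subobjects in the larger category, is dual to a faithfully flat morphism of the associated affine group schemes; this is the fpqc-quotient analogue of the closed-immersion criterion used in Proposition \ref{criterion}(1).

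For the first arrow $\pi^S(X,x)\to\pi^N(X,x)$, I would verify the inclusion $\ef(X)\subseteq\cnf(X)$ as a full Tannakian subcategory. The key input is that every finite locally free sheaf $F$ is numerically flat: the relation $f(F)\cong g(F)$ with distinct $f,g\in\mathbb{Z}[t]$ of non-negative coefficients forces the set $\{F^{\otimes n}\}_{n\geq 1}$ to lie in a finite collection of isomorphism classes, which in turn forces $\O_{\P(F)}(1)$ and $\O_{\P(F^\vee)}(1)$ to be nef. Granted this, the building blocks $V_1/V_2$ of an essentially finite bundle are numerically flat because $V_1\subseteq\bigoplus_i F_i$ and $V_2$ are numerically flat, and a locally free quotient of a numerically flat bundle by a numerically flat subbundle is again numerically flat. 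Closure under subobjects of the resulting subcategory is immediate from the definition of essential finiteness, and Tannaka duality produces the desired faithfully flat $\pi^S(X,x)\to\pi^N(X,x)$.

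For the second arrow $\pi^N(X,x)\to\pi^{\et}(X,x)$, I would identify $\Rep_k(\pi^{\et}(X,x))$ with the full Tannakian subcategory of $\ef(X)$ whose objects become trivial after pullback along some finite \'etale cover of $X$. Concretely, for each finite \'etale Galois cover $f\colon X'\to X$ with Galois group $G$ and each $\sigma\in\Rep_k(G)$, the sheaf $E_\sigma=(f_*\O_{X'}\otimes_k V)^G$ constructed in Section 3 is essentially finite: $f_*\O_{X'}$ is finite in Nori's sense, and $E_\sigma$ is a direct summand of a finite direct sum of copies of $f_*\O_{X'}$. Cofilteredness of the system of finite \'etale covers gives closure of this subcategory under the Tannakian operations and under subobjects, and Tannaka duality then yields the desired faithfully flat quotient.

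The main obstacle is the positivity step underlying the first arrow: one must upgrade the purely formal polynomial identity $f(F)\cong g(F)$ into nefness of the tautological line bundles on $\P(F)$ and $\P(F^\vee)$, and then show that numerical flatness descends to locally free subquotients. This is a genuinely geometric positivity statement (it fails without the locally free hypothesis) rather than a formal manipulation of Tannakian categories, and it is precisely the content that the present paper is importing from \cite{Lan11}.
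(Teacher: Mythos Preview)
The paper does not prove this lemma at all: it is stated with the attribution ``Lemma 6.2, \cite{Lan11}'' and immediately used in the proof of Corollary \ref{theorem2}, with no argument supplied. So there is nothing in the paper to compare your proposal against beyond the bare citation.

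That said, your sketch is the correct outline of how the result is actually established in \cite{Lan11} and in the surrounding literature. The Tannakian criterion you invoke (a fully faithful $\otimes$-functor whose essential image is closed under subobjects is dual to a faithfully flat homomorphism, cf.\ \cite[Proposition 2.21]{DMOS82}) is exactly the right tool, and your verification that $\ef(X)$ is closed under subobjects in $\cnf(X)$ is correct: if $V=V_1/V_2$ with $V_2\subseteq V_1\subseteq\bigoplus F_i$ and $W\subseteq V$ is a numerically flat subbundle, its preimage $V_1'\subseteq V_1$ is an extension of $W$ by $V_2$, hence numerically flat, and $W=V_1'/V_2$ is essentially finite by definition. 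Your identification of $\Rep_k(\pi^{\et}(X,x))$ with the \'etale-trivializable bundles is likewise the standard one. The one place where your sketch is slightly loose is the claim that finite bundles are numerically flat ``because the $\{F^{\otimes n}\}$ lie in finitely many isomorphism classes'': what one actually uses is that the indecomposable summands of all $F^{\otimes n}$ form a finite set, which bounds the degrees and forces nefness on every curve; this is Nori's original argument and is indeed the geometric input you flag as the main obstacle.
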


\begin{corollary}\label{theorem2}
Let $k$ be an algebraically closed field of
characteristic $p>3$ and $n$ be an integer 
$\geq 2$ such that $p \nmid n$.
Then the Nori's fundamental group scheme $\pi^N(K^{[n]},n[e])$ and \'etale fundamental group scheme $\pi^{\et}(K^{[n]},n[e])$ of the $n$-th generalized Kummer variety $K^{[n]}$ over $k$ are trivial.
\end{corollary}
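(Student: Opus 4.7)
The plan is very short: I would deduce the corollary directly from Theorem~\ref{theorem1} and Lemma~\ref{Lan116.2}. Since $K^{[n]}$ is smooth projective (by Section~4), hence complete, connected and reduced, Lemma~\ref{Lan116.2} provides natural faithfully flat homomorphisms
\[
\pi^S(K^{[n]},n[e]) \twoheadrightarrow \pi^N(K^{[n]},n[e]) \twoheadrightarrow \pi^{\et}(K^{[n]},n[e]).
\]

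By Theorem~\ref{theorem1}, the leftmost group scheme $\pi^S(K^{[n]},n[e])$ is trivial. A faithfully flat homomorphism of affine group schemes over $k$ is in particular surjective (an epimorphism in the category of affine $k$-group schemes), so the image of a trivial group scheme must be trivial. Applying this to the first arrow gives $\pi^N(K^{[n]},n[e]) = 1$, and then applying it to the second arrow gives $\pi^{\et}(K^{[n]},n[e]) = 1$.

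There is no real obstacle here: once Theorem~\ref{theorem1} has been established, the corollary is a one-line consequence of the comparison maps in Lemma~\ref{Lan116.2}. The only thing to verify is that the hypotheses of Lemma~\ref{Lan116.2} are met, namely that $K^{[n]}$ is complete, connected and reduced with $n[e]$ a $k$-rational point, all of which follow from Section~4.
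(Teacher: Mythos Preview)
Your proposal is correct and follows exactly the same route as the paper: the paper's proof simply states that the assertion is clear from Theorem~\ref{theorem1} and Lemma~\ref{Lan116.2}, and you have merely unpacked that one-liner by noting that faithfully flat homomorphisms out of the trivial group scheme force the targets to be trivial.
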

\begin{proof}
Using Theorem \ref{theorem1} and Lemma \ref{Lan116.2} the assertion is clear.
\end{proof}

\newcommand{\etalchar}[1]{$^{#1}$}

\end{document}